\spnewtheorem*{mainconj}{Main Conjecture}{\bfseries}{\itshape}
\newcommand{\seclabel}[1]{\label{sec:#1}}   
\newcommand{\thmlabel}[1]{\label{thm:#1}}   
\newcommand{\lemlabel}[1]{\label{lem:#1}}   
\newcommand{\secref}[1]{\ref{sec:#1}}   
\newcommand{\lemref}[1]{\ref{lem:#1}}   
\newcommand{\Mlt}{\mathrm{Mlt}}         
\newcommand{\Inn}{\mathrm{Inn}}			
\newcommand{\ldiv}{\backslash}					
\newcommand{\rdiv}{\slash}						
\newcommand{\inv}{^{-1}}						
\newcommand{\sbl}[1]{\langle#1\rangle}			
\newcommand{\cl}[1]{\mathrm{c\ell}(#1)}         
\newcommand{\Csorgo}{Cs\"{o}rg\H{o}\ }          
\title{Loops with abelian inner mapping groups:
an application of automated deduction\thanks{Dedicated to the memory of William McCune (1953--2011).}}
\author{Michael Kinyon\inst{1} \and
Robert Veroff\inst{2} \and
Petr Vojt\v{e}chovsk\'y\thanks{Partially supported by Simons Foundation
Collaboration Grant 210176.}\inst{1}}
\institute{Department of Mathematics, University of Denver, \\
Denver, CO 80208 USA \\
\email{mkinyon@math.du.edu}\qquad
\email{\url{www.math.du.edu/~mkinyon}} \\
\email{petr@math.du.edu}\qquad
\email{\url{www.math.du.edu/~petr}}
\and
Department of Computer Science, University of New Mexico, \\
Albuquerque, NM 87131 USA \\
\email{veroff@cs.unm.edu}\qquad
\email{\url{www.cs.unm.edu/~veroff}}
}
\begin{document}


\maketitle

\begin{abstract}
We describe a large-scale project in applied automated deduction concerned with the following problem of considerable interest in loop theory: If $Q$ is a loop with commuting inner mappings, does it follow that $Q$ modulo its center is a group and $Q$ modulo its nucleus is an abelian group? This problem has been answered affirmatively in several varieties of loops. The solution usually involves sophisticated techniques of automated deduction, and the resulting derivations are very long, often with no higher-level human proofs available.
\end{abstract}

\section{Introduction}
\seclabel{intro}

A \emph{quasigroup} $(Q,\cdot)$ is a set $Q$ with a binary operation $\cdot$ such that for each $x\in Q$, the \emph{left translation}
$L_x : Q\to Q ; y\mapsto yL_x = xy$ and the \emph{right translation} $R_x : Q\to Q ; y\mapsto yR_x = yx$
are bijections. A quasigroup is a \emph{loop} if, in addition, there exists $1\in Q$ satisfying $1\cdot x = x\cdot 1 = x$ for all $x\in Q$.
Standard references for the theory of quasigroups and loops are \cite{Belousov}, \cite{Bruck} and \cite{Pflugfelder}.

\begin{example}\label{Ex:loop}
The above definition merely says that the multiplication table of a loop is a \emph{Latin square} (that is, every symbol occurs in every row and in every column precisely once), in which the row labels are duplicated in column $1$ and the column labels are duplicated in row $1$.

For instance, the following multiplication table defines a loop $Q$ with elements $\{1,\dots,6\}$.
\begin{displaymath}
    \begin{array}{c|cccccc}
    \cdot\enspace &\thinspace 1\thinspace &\thinspace 2\thinspace &
    \thinspace 3\thinspace &\thinspace 4\thinspace &\thinspace 5\thinspace &\thinspace 6 \\
    \hline
    1\thinspace&1&2&3&4&5&6\\
    2\thinspace&2&1&4&3&6&5\\
    3\thinspace&3&4&5&6&1&2\\
    4\thinspace&4&3&6&5&2&1\\
    5\thinspace&5&6&2&1&3&4\\
    6\thinspace&6&5&1&2&4&3
    \end{array}
\end{displaymath}
Note that $Q$ is neither commutative (as $3\cdot 5 = 1$ while $5\cdot 3 = 2$), nor associative (as $3\cdot (3\cdot 4) = 2$ while $(3\cdot 3)\cdot 4 = 1$). The left translation $L_2$ is the permutation $(1,2)(3,4)(5,6)$, the right translation $R_3$ is the permutation $(1,3,5,2,4,6)$.
\end{example}

A quasigroup can be equivalently defined as a set $Q$ with \emph{three} binary operations $\cdot$ (multiplication), $\ldiv$ (left division), and $\rdiv$ (right division) satisfying the axioms
\[
x\cdot (x\ldiv y) = y = x\ldiv (x\cdot y)\,,\qquad (x\cdot y)\rdiv y = x = (x\rdiv y)\cdot y\,.
\]
Starting with a quasigroup $(Q,\cdot)$, one merely needs to set $x\ldiv y = yL_x^{-1}$ and $x\rdiv y = xR_y^{-1}$. Conversely, starting with a three operation quasigroup \mbox{$(Q,\cdot,\ldiv,\rdiv)$}, the above axioms guarantee that all translations $L_x$, $R_x$ are bijections of $Q$. For more details on the equivalence of the two definitions, see \cite{Evans}.

Since quasigroups and loops can be equationally defined, they have been fallow ground for the techniques and tools of automated deduction. They appeared already in the milestone paper of Knuth and Bendix \cite{KB}, and interest in
them has continued in the theorem-proving community \cite{Hullot} \cite{PSATO}. In more recent years,
mathematicians specializing in quasigroups and loops have been making significant use of automated deduction tools
\cite{KepKinPhil1,diassoc,general,dicc,extra,pacc,Cloops2,commutant,AIM-LC-Web,KB,moufang,laws,rings,gloops,cc}
\cite{P-short,P-Moufang,PSh,PSt1,PSt2,PV-BM1,PV-BM2,PV-C,PV-Scoop}.
%
%
With the exception of \cite{PSt1} and \cite{PSt2}, all of the aforementioned references used Bill McCune's \textsc{Prover9} \cite{Prover9} or
its predecessor \textsc{Otter}~\cite{Otter}.

The purpose of this paper is to report on progress in a large-scale project
in the application of automated deduction to loop theory. The primary tool
has been \textsc{Prover9}, and the search for proofs has relied heavily
on the method of \emph{proof sketches}~\cite{Sketches}.  Proof sketches
have been especially effective in this project, in part because of the
large number of closely related problems being considered.

In {\S}\secref{problem} we give the mathematical background and the Main Conjecture and then explain why it is a problem suitable for equational reasoning tools. In {\S}\secref{strategy} we discuss our strategy for attacking the problem with \textsc{Prover9}. As an example, in {\S}\secref{lcloops} we discuss one particular class of loops for which we were able to solve the problem. Finally in {\S}\secref{finale} we mention other classes of loops for which the solution is known.

\section{The Main Conjecture and the Project}
\seclabel{problem}

The left and right translations in a loop $Q$ generate the \emph{multiplication group} $\Mlt(Q) = \sbl{L_x, R_x \mid x\in Q}$, a subgroup of the group of all bijections on $Q$. The \emph{inner mapping group} $\Inn(Q) = \{\varphi\in\Mlt(Q)\mid 1\varphi = 1\}$ is a subgroup of $\Mlt(Q)$ consisting of all bijections in $\Mlt(Q)$ that fix the element $1$.

A loop $Q$ is said to be an \emph{AIM loop} (for \textbf{A}belian \textbf{I}nner \textbf{M}appings) if $\Inn(Q)$ is an abelian group. AIM loops are the main subject of this investigation.

A nonempty subset $S$ of a loop $Q$ is a \emph{subloop} ($S\le Q$) if it is closed under the three operations $\cdot$, $\ldiv$, $\rdiv$. A subloop $S$ of $Q$ is \emph{normal} ($S\unlhd Q$) if $S\varphi = \{s\varphi\mid s\in S\}$ is equal to $S$ for every $\varphi\in\Inn(Q)$.

To save space, we will often denote $x\cdot y$ by $xy$, and we will use $\cdot$ to indicate the priority of multiplication. For instance, $x\cdot yz$ stands for $x\cdot (y\cdot z)$.

The \emph{left}, \emph{right}, and \emph{middle nucleus} of a loop $Q$ are defined, respectively, by
\begin{align*}
N_{\lambda}(Q) &= \{ a\in Q\mid ax\cdot y = a\cdot xy,\quad \forall x,\,y\in Q\}, \\
N_{\rho}(Q) &= \{ a\in Q\mid xy\cdot a = x\cdot ya,\quad \forall x,\,y\in Q\}, \\
N_{\mu}(Q) &= \{ a\in Q\mid xa\cdot y = x\cdot ay,\quad \forall x,\,y\in Q\},
\end{align*}
and the \emph{nucleus} is $N(Q) = N_{\lambda}(Q)\cap N_{\rho}(Q)\cap N_{\mu}(Q)$. It is not hard to show that each of these nuclei is a subloop.

The \emph{commutant} of a loop $Q$ is the set
\[
C(Q) = \{ a\in Q\mid ax = xa\quad \forall x\in Q\}\,,
\]
which is not necessarily a subloop. The \emph{center} of $Q$ is
\[
Z(Q) = C(Q) \cap N(Q)\,.
\]
The center is always a normal subloop.

Thus the nucleus $N(Q)$ consists of all elements $a\in Q$ that associate with all $x$, $y\in Q$, the commutant $C(Q)$ consists of all elements $a\in Q$ that commute with all $x\in Q$, and the center $Z(Q)$ consists of all elements $a\in Q$ that commute and associate with all $x$, $y\in Q$.

For a general loop $Q$, the inclusions $N_{\lambda}(Q)\le N(Q)$, $N_{\rho}(Q)\le N(Q)$, $N_{\mu}(Q)\le N(Q)$, $N(Q)\le Z(Q)$ and $C(Q)\le Z(Q)$ hold, but not necessarily the equalities.

Given a normal subloop $S$ of a loop $Q$, denote by $Q/S$ the \emph{factor loop} $Q$ modulo $S$ whose elements are the subsets (left cosets) $xS = \{xs\mid s\in S\}$ for $x\in Q$, and where we multiply according to $xS\cdot yS = (x\cdot y)S$.

Now define $Z_0(Q)=\{1\}$, and for $i\ge 0$ let $Z_{i+1}(Q)$ be the preimage of $Z(Q/Z_i(Q))$ under the canonical projection $\pi_i:Q\to Q/Z_i(Q);\;x\mapsto xZ_i(Q)$. Note that $\{1\} = Z_0(Q)\le Z_1(Q) \le Z_2(Q) \le \cdots \le Q$. The loop $Q$ is \emph{(centrally) nilpotent of class} $n$, written $\cl{Q} = n$, if $Z_{n-1}(Q)\ne Q$ and $Z_n(Q)=Q$.

\begin{example}\label{Ex:loop2}
Let $Q$ be the loop from Example \ref{Ex:loop}. A short computer calculation shows that the multiplication group $\Mlt(Q)$ has size $24$, and the inner mapping group $\Inn(Q)$ consists of the permutations $()$, $(3,4)$, $(5,6)$, $(3,4)(5,6)$. In particular, $\Inn(Q)$ is a group of size $4$, hence an abelian group, and $Q$ is therefore an AIM loop.

$Q$ has only three subloops, namely the subsets $\{1\}$, $\{1,2\}$ and $\{1,2,3,4,5,6\}$. It so happens that all four nuclei, the commutant and the center of $Q$ are equal to the normal subloop $\{1,2\}$. We have $Z_0(Q)=1$, $Z_1(Q)=Z(Q)=\{1,2\}$ and $Z_2(Q) = \{1,2,3,4,5,6\}$ (because $Q/Z_1(Q)$ is an abelian group). Thus $\cl{Q}=2$.
\end{example}

Recall (or see \cite[Thm. 7.1]{rotman}) that if $Q$ is a group then
\begin{equation}\label{Eq:Iso}
    \text{$\Inn(Q)$ is isomorphic to $Q/Z(Q)$}.
\end{equation}
This result cannot be generalized to loops. For instance, we saw in Example \ref{Ex:loop2} that there is a loop $Q$ of size $6$ with $\Inn(Q)$ of size $4$ and $Q/Z(Q)$ of size $6/2=3$.

Now, if $Q$ is a group, we deduce from \eqref{Eq:Iso} that
\begin{equation}\label{Eq:ClassPlus1}
    \text{$\cl{\Inn(Q)}\le n$ if and only if $\cl{Q}\le n+1$}.
\end{equation}
Neither of the two implications in \eqref{Eq:ClassPlus1} generalizes to loops. Indeed, Vesanen found a loop $Q$ of size $18$ with $\cl{Q} = 3$ such that $\Inn(Q)$ is not even nilpotent, much less of nilpotency class at most $2$; see \cite{KP}. To falsify the other implication, we will see below that there exist loops with $\cl{\Inn(Q)}=n$ but with $\cl{Q}\ne n+1$. (Note, however, that Niemenmaa was able to prove recently that if $Q$ is finite and $\Inn(Q)$ is nilpotent then $Q$ is at least nilpotent \cite{Niem}.)

Using $n=1$ in \eqref{Eq:ClassPlus1}, we see that for a group $Q$
\begin{equation}
\label{Eq:AIMLike}
\begin{tabular}{l}
  $\Inn(Q)$ is abelian (that is, $Q$ is an AIM loop) \\
  if and only if $\cl{Q}\le 2$.
\end{tabular}
\end{equation}
Does this statement generalize to loops? The answer is of importance in loop theory because it sheds light on loops of nilpotency class $2$, in some sense the most immediate generalization of abelian groups to loops.

Bruck showed in \cite{Bruck-TAMS} that one implication of \eqref{Eq:AIMLike} holds for all loops: if $Q$ is a loop with $\cl{Q}\le 2$ then $Q$ is an AIM loop. For a long time, it was conjectured that the other implication of \eqref{Eq:AIMLike} also holds, and much work in loop theory was devoted to this problem \cite{CsK} \cite{Kepka} \cite{NK}. However, in 2004, \Csorgo \cite{Csorgo} disproved \eqref{Eq:AIMLike} by constructing an AIM loop $Q$ (of size $128$) with $\cl{Q}=3$.

Consequently, AIM loops $Q$ with $\cl{Q}\ge 3$ have come to be called \emph{loops of \Csorgo type}. Additional constructions of loops of \Csorgo type followed in rapid succession \cite{DrK} \cite{DV} \cite{GN}. We remark that it is still not known if there exists an AIM loop $Q$ with $\cl{Q}>3$.

What can be salvaged from the statement \eqref{Eq:AIMLike} for loops? Based on the structure of all known loops of \Csorgo type, the first-named author has been offering the following structural conjecture in various talks and presentations. This is the first published statement of the conjecture.

\begin{mainconj}
Let $Q$ be an AIM loop. Then $Q/N(Q)$ is an abelian group and $Q/Z(Q)$ is a group. In particular, $Q$ is nilpotent of class at most $3$.
\end{mainconj}

Three remarks are worth making here. First, the primary assertion of the Main Conjecture is actually somewhat stronger than the ``in particular'' part, that is, having nilpotency class $3$ does not necessarily imply $Q/N(Q)$ is abelian or $Q/Z(Q)$ is a group. Second, note that the Main Conjecture makes no reference to cardinality of the loop $Q$. It is certainly conceivable that the conjecture holds for all finite loops but that there is some infinite counterexample. Finally, it is tacit in the statement of the conjecture that the nucleus $N(Q)$ is a normal subloop of $Q$ (else the factor loop $Q/N(Q)$ cannot be formed). This is not true for arbitrary loops but is easy to show for AIM loops.

\bigskip

From the discussion so far, it may seem that the Main Conjecture is too high order to be attacked fruitfully by the methods of automated deduction. However, the hypotheses and conclusions of the conjecture can be given purely equationally as we now describe.

\bigskip

Bruck showed \cite{Bruck} that the inner mapping group is generated by three kinds of mappings that measure deviations from associativity and commutativity, namely,
\begin{displaymath}
    \Inn(Q) = \langle R_{x,y},\,T_x,\,L_{x,y};\;x,\,y\in Q\rangle,
\end{displaymath}
where
\begin{displaymath}
    R_{x,y} = R_x R_y R_{xy}\inv\,,
    \qquad
    T_x = R_x L_x\inv\,,
    \qquad
    L_{x,y} = L_x L_y L_{yx}\inv\,.
\end{displaymath}

Since a group is abelian if and only if any two of its generators commute, we immediately obtain the following characterization of AIM loops:

\begin{lemma}
\lemlabel{inn_eq}
A loop $Q$ is an AIM loop if and only if the following identities hold:
\begin{align*}
    T_x T_y &= T_x T_y,\\
    L_{x,y} L_{z,w} &= L_{z,w} L_{x,y},\\
    R_{x,y} R_{z,w} &= R_{z,w} R_{x,y},\\
    L_{x,y} T_z &= T_z L_{x,y}, \\
    R_{x,y} T_z &= T_z R_{x,y}, \\
    L_{x,y} R_{z,w} &= R_{z,w} L_{x,y}
\end{align*}
for all $x$, $y$, $z$, $w\in Q$.
\end{lemma}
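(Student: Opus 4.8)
The plan is to reduce the statement to one elementary group-theoretic fact together with the generation result $\Inn(Q) = \sbl{R_{x,y},\,T_x,\,L_{x,y};\;x,\,y\in Q}$ quoted above from Bruck. The fact I would invoke is that a group $G = \sbl{S}$ is abelian if and only if every pair of elements of the generating set $S$ commutes. Once this is in hand, the lemma is essentially bookkeeping: I apply it with $G = \Inn(Q)$, regarded as a group of bijections of $Q$ under composition, and with $S = \{T_x \mid x\in Q\} \cup \{L_{x,y}\mid x,y\in Q\} \cup \{R_{x,y}\mid x,y\in Q\}$.

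First I would prove the group-theoretic fact, since its forward direction is the only nontrivial point. Suppose every pair of generators commutes and fix $s\in S$. Then $S$ lies in the centralizer $C_G(s)$, which is a subgroup, so $G = \sbl{S}\le C_G(s)$; hence $s$ commutes with all of $G$, i.e.\ $s\in Z(G)$. As $s\in S$ was arbitrary, $S\le Z(G)$, and therefore $G = \sbl{S}\le Z(G)$, so $G$ is abelian. The converse is trivial. Next I would enumerate the pairs of generators. Because $S$ is the union of three families, an unordered pair of generators is of exactly one of six kinds, according to whether it involves two maps of type $T$, two of type $L$, two of type $R$, or one each of the types $T$--$L$, $T$--$R$, $L$--$R$. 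Reading ``commutes'' as equality of the two composite maps (legitimate because the product in $\Inn(Q)$ is composition of functions), each of the six kinds is precisely one of the six displayed identities, universally quantified over the parameters $x,y,z,w$.

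The main --- and essentially the only --- obstacle is to keep the translation between abstract commutativity of generators and equality of composed translations honest. In particular one must note that the identities carry universal quantifiers over their parameters, so that, for example, $T_x T_y = T_y T_x$ for all $x,y$ genuinely expresses that every pair drawn from the $T$-family commutes, and that in the mixed identities the parameters of the two maps range independently. I would also silently correct the evident typographical slip in the first displayed line, which should read $T_x T_y = T_y T_x$ rather than $T_x T_y = T_x T_y$. Everything beyond these observations is routine.
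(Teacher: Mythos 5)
Your proposal is correct and follows exactly the route the paper takes: it combines Bruck's generation result for $\Inn(Q)$ with the elementary fact that a group is abelian if and only if every pair of generators commutes, and then enumerates the six kinds of generator pairs (the paper treats this as immediate and gives no further proof, so your centralizer argument simply fills in that standard detail). Your observation that the first displayed identity should read $T_x T_y = T_y T_x$ is also correct; that is a typographical slip in the statement.
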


To encode the conclusions of the Main Conjecture, it is useful to introduce two more functions, the \emph{associator}
\[
[x,y,z] = (x\cdot yz)\ldiv (xy\cdot z)
\]
and the \emph{commutator}
\[
[x,y]= (yx)\ldiv (xy)\,,
\]
that is, $(x\cdot yz)[x,y,z] = xy\cdot z$ and $yx\cdot [x,y] = xy$.
The former is a measure of nonassociativity and the latter is a measure of noncommutativity.
Different conventions are possible for each of these functions, \emph{e.g.} one could use
$(xy\cdot z)\rdiv (x\cdot yz)$ as a definition of associator. Our convention is the traditional one \cite{Bruck}.
Note that an element $a\in Q$ is in the left nucleus $N_{\lambda}(Q)$ if and only if $[a,x,y] = 1$ for all $x$, $y\in Q$, and that the other nuclei are similarly characterized.

The following is now a straightforward consequence of the definitions.

\begin{lemma}
\lemlabel{QNabelian}
Let $Q$ be a loop. Then
\begin{enumerate}
\item[(i)] If $N(Q)\unlhd Q$ then $Q/N(Q)$ is an abelian group if and only if the following identities hold:
\begin{align*}
[[x,y,z],u,v] &= [u,[x,y,z],v] = [u,v,[x,y,z]] = 1\,, \\
[[x,y],z,u] &= [z,[x,y],u] = [z,u,[x,y]] = 1
\end{align*}
for all $x,y,z,u,v\in Q$;
\item [(ii)] $Q/Z(Q)$ is a group if and only if the following identities hold:
\begin{align*}
[[x,y,z],u,v] = [u,[x,y,z],v] &= [u,v,[x,y,z]] = 1\,, \\
[[x,y,z],u] &= 1
\end{align*}
for all $x,y,z,u,v\in Q$.
\end{enumerate}
\end{lemma}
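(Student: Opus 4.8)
The plan is to reduce both parts to two standard facts: a loop is a group exactly when it is associative (and an abelian group exactly when it is moreover commutative), and the canonical projection onto a factor loop carries associators to associators and commutators to commutators. I will use freely the characterizations recorded just before the lemma: $a\in N_{\lambda}(Q)$ iff $[a,x,y]=1$ for all $x,y$, and analogously $a\in N_{\mu}(Q)$ iff $[x,a,y]=1$ and $a\in N_{\rho}(Q)$ iff $[x,y,a]=1$; hence $a\in N(Q)$ iff $[a,u,v]=[u,a,v]=[u,v,a]=1$ for all $u,v$. Likewise, since $[u,a]=(au)\ldiv(ua)$, one has $a\in C(Q)$ iff $[u,a]=1$ for all $u$, so that $a\in Z(Q)=C(Q)\cap N(Q)$ iff all of these associators together with $[u,a]$ vanish.

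First I would verify that the projection $\pi\colon Q\to Q/S$ (with $S=N(Q)$ in part (i) and $S=Z(Q)$ in part (ii)) is a homomorphism for all three quasigroup operations. Multiplication is preserved by the definition of $Q/S$, and preservation of $\ldiv$ and $\rdiv$ then follows because divisions are uniquely determined by multiplication in any quasigroup; for instance $xS\cdot\pi(x\ldiv y)=\pi(x\cdot(x\ldiv y))=yS$ forces $\pi(x\ldiv y)=xS\ldiv yS$. Since the associator and the commutator are terms in these operations, $\pi$ preserves them: $[x,y,z]S=[xS,yS,zS]$ and $[x,y]S=[xS,yS]$. Consequently $Q/S$ is associative iff $[xS,yS,zS]=S$ for all $x,y,z$, that is, iff $[x,y,z]\in S$ for all $x,y,z$; and $Q/S$ is commutative iff $[x,y]\in S$ for all $x,y$.

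With this in hand the two parts are a bookkeeping substitution. For (i), the hypothesis $N(Q)\unlhd Q$ makes $Q/N(Q)$ a loop, which is an abelian group iff it is both associative and commutative, i.e. iff $[x,y,z]\in N(Q)$ for all $x,y,z$ and $[x,y]\in N(Q)$ for all $x,y$. Feeding the nuclear description of $N(Q)$ with $a=[x,y,z]$ produces exactly the first displayed line of identities, and with $a=[x,y]$ the second line. For (ii), the center is always normal, so $Q/Z(Q)$ is a loop and is a group iff it is associative, i.e. iff $[x,y,z]\in Z(Q)$ for all $x,y,z$; expanding membership in $Z(Q)=C(Q)\cap N(Q)$ with $a=[x,y,z]$ yields the three nuclear associator identities together with the single commutator identity $[[x,y,z],u]=1$, precisely as stated.

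I expect the only steps that are not pure bookkeeping to be the term-preservation claim of the second paragraph and the background fact that a loop is a group exactly when it is associative; both are standard, which is why the lemma can be advertised as a straightforward consequence of the definitions. The mild care needed is simply to confirm that $\pi$ respects the divisions (not merely multiplication), so that associators and commutators, being built from all three operations, genuinely pass to the quotient.
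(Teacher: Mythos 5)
Your proposal is correct and follows essentially the same route as the paper: both reduce the claim to the observation that $Q/S$ is associative (resp.\ commutative) iff $[x,y,z]\in S$ (resp.\ $[x,y]\in S$), and then translate membership of associators and commutators in $N(Q)$, $C(Q)$ and $Z(Q)=N(Q)\cap C(Q)$ into the displayed identities. Your extra remark that the projection also respects the divisions is a slightly more explicit justification of the step the paper carries out as the chain of equivalences $(xy\cdot z)S=(x\cdot yz)S \Leftrightarrow [x,y,z]S=S \Leftrightarrow [x,y,z]\in S$, but the substance is the same.
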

\begin{proof}
Let $S\unlhd Q$. The following conditions are equivalent: $(xS\cdot yS)\cdot zS = xS\cdot (yS\cdot zS)$, $(xy\cdot z)S = (x\cdot yz)S$, $((x\cdot yz)\ldiv (xy\cdot z))S = S$, $[x,y,z]S = S$, $[x,y,z]\in S$. Thus $Q/S$ is a group if and only if $[x,y,z]\in S$ for every $x$, $y$, $z\in Q$. Similarly, $Q/S$ is commutative if and only if $[x,y]\in S$ for every $x$, $y\in Q$.

The condition $[[x,y,z],u,v] = [u,[x,y,z],v] = [u,v,[x,y,z]] = 1$ says that $[x,y,z]\in N(Q)$ for all $x$, $y$, $z\in Q$, and the condition
$[[x,y],z,u] = [z,[x,y],u] = [z,u,[x,y]] = 1$ says that $[x,y]\in N(Q)$ for all $x$, $y\in Q$. This proves (i).

Concerning (ii), the condition $[[x,y,z],u] = 1$ says that $[x,y,z]\in C(Q)$ for every $x$, $y$, $z\in Q$. Since $Z(Q)=N(Q)\cap C(Q)$, we are done.\qed
\end{proof}

In Figure \ref{Fg:Input} we give a basic \textsc{Prover9} input file for the Main Conjecture. To encode loops, we use the definition with three binary operations $\cdot$, $\ldiv$ and $\rdiv$. The clauses labeled ``obvious compatibility'' are not necessary, but are included to help the search. Note how the assumptions on AIM loops correspond to the identities of Lemma \lemref{inn_eq}, and how the goals correspond to the identities of Lemma \lemref{QNabelian}.

\begin{figure}
\begin{small}
\begin{verbatim}
formulas(assumptions).
   % loop axioms
   1 * x = x.           x * 1 = x.
   x \ (x * y) = y.     x * (x \ y) = y.
   (x * y) / y = x.     (x / y) * y = x.

   % associator
   (x * (y * z)) \ ((x * y) * z) = a(x,y,z).

   % commutator
   (x * y) \ (y * x) = K(y,x).

   % inner mappings
   % L(u,x,y) = u L(x) L(y) L(yx)^{-1}
   (y * x) \ (y * (x * u)) = L(u,x,y).

   % R(u,x,y) = u R(x) R(y) R(xy)^{-1}
   ((u * x) * y) / (x * y) = R(u,x,y).

   % T(u,x) = u R(x) L(x)^{-1}
   x \ (u * x) = T(u,x).

   % obvious compatibility
   a(x,y,z) = 1 -> L(z,y,x) = z.    L(x,y,z) = x -> a(z,y,x) = 1.
   T(x,y) = x -> T(y,x) = y.        T(x,y) = x -> K(x,y) = 1.
   K(x,y) = 1 -> T(x,y) = x.

   % abelian inner mapping group (AIM loop)
   T(T(u,x),y) = T(T(u,y),x).
   L(L(u,x,y),z,w) = L(L(u,z,w),x,y).
   R(R(u,x,y),z,w) = R(R(u,z,w),x,y).
   T(L(u,x,y),z) = L(T(u,z),x,y).
   T(R(u,x,y),z) = R(T(u,z),x,y).
   L(R(u,x,y),z,w) = R(L(u,z,w),x,y).
end_of_list.

formulas(goals).
   a(K(x,y),z,u) = 1              # label("aK1").
   a(x,K(y,z),u) = 1              # label("aK2").
   a(x,y,K(z,u)) = 1              # label("aK3").
   K(a(x,y,z),u) = 1              # label("Ka").
   a(a(x,y,z),u,w) = 1            # label("aa1").
   a(x,a(y,z,u),w) = 1            # label("aa2").
   a(x,y,a(z,u,w)) = 1            # label("aa3").
end_of_list.
\end{verbatim}
\end{small}
\caption{A \textsc{Prover9} input file for the problem ``In an AIM loop $Q$, is $Q/N(Q)$ an abelian group
and is $Q/Z(Q)$ a group?''}
\label{Fg:Input}
\end{figure}

A resolution one way or the other of the full Main Conjecture would be a major milestone in loop theory, alas, the answer remains elusive. Nevertheless we managed to confirm the Main Conjecture for several classes of loops. We describe one of the successful cases in {\S}\secref{lcloops} and list others in {\S}\secref{finale}.

\section{The Strategy}
\seclabel{strategy}

Our search for proofs for the various AIM cases involves sequences of
\textsc{Prover9} experiments that rely heavily on the use of
hints~\cite{Hints} and on the method of proof sketches~\cite{Sketches}.
Under the hints strategy, a generated clause is given special consideration if
it \emph{matches} (subsumes) a user-supplied hint clause.  In \textsc{Prover9},
the actions associated with hint matching are controllable by the user, but
the most typical action is to give hint matchers the highest
priority in the proof search.

A \emph{proof sketch} for a theorem $T$ is a sequence of clauses giving a
set of conditions \emph{sufficient} to prove $T$.  In the ideal case, a
proof sketch consists of a sequence of lemmas, where each lemma is fairly
easy to prove.  In any case, the clauses of a proof sketch identify
potentially notable milestones on the way to finding a proof.
From a strategic standpoint, it is desirable to recognize when we have
achieved such milestones and to adapt the continued search for a proof
accordingly.  In particular, we want to focus our attention
on such milestone results and pursue their consequences sooner rather
than later.  The hints mechanism provides a natural and effective way
to take full advantage of proof sketches in the search for a proof.

The use of hints is additive in the sense that hints from multiple
proof sketches or from sketches for different parts of a proof can
all be included at the same time.  For this reason, hints are
particularly valuable for ``gluing'' subproofs together and completing
partial proofs, even when wildly different search strategies were used
to find the individual subproofs.

In \cite{Sketches}, we consider how the generation and use of proof sketches,
together with the sophisticated strategies and procedures supported by
an automated reasoning program such as \textsc{Prover9}, can be used to
find proofs to challenging theorems, including open questions.
The general approach is to find proofs with additional assumptions
and then to systematically eliminate these assumptions from the input set,
using all previous proofs as hints.  It also can be very effective to
include as proof sketches proofs of related theorems in the same area
of study.  In the AIM study, for example, proofs for the LC case (see Section \secref{lcloops}) were
found by first proving the LC case with the additional assumption
``left inner maps preserve inverses'',

\begin{verbatim}
   L(x,y,z) \ 1 = L(x \ 1,y,z).
\end{verbatim}

\noindent
The resulting proofs, together with proofs of other previously proved cases,
were included as proof sketches in the search that found the proofs for
LC alone.

Our strategy for searching for AIM proofs is based on the following
two observations.
\begin{itemize}
\item Proofs for the various special cases tend to share several key steps.
       This makes these problems especially amenable to the use of previously
       found proofs as hints.
\item Proof searches in this problem area tend to be especially sensitive
       to the underlying lexical ordering of terms.  This is due primarily
       to the resulting effect on the demodulation (rewriting) of deduced
       clauses.
\end{itemize}

Addressing the second observation, we can run multiple searches, trying each
of several term orderings in turn.  But rather than simply running each of
these searches as independent attempts, we can leverage \textsc{Prover9}'s
hints mechanism to take advantage of any apparent progress made in previous
searches.  In particular, after running a search with one term ordering,
we can gather the derived clauses that match hints and include these as
input assumptions for the following runs.

The second-named author has automated this approach with a utility called
\texttt{p9loop}. \texttt{P9loop} takes
as input an ordinary \textsc{Prover9} input file (including hints)
and a list of term ordering directives and proceeds as follows.

\begin{enumerate}
\item[$\bullet$] Run \textsc{Prover9} with a term ordering from the list until either a
       proof is found or a user-specified processing limit is reached.
\item[$\bullet$] If no proof has been found, restart \textsc{Prover9} with the next term
       ordering from the list, including all of the previous hint matchers
       as additional input assumptions.
\end{enumerate}

We have had numerous successes using this approach, sometimes finding proofs
that rely on several \texttt{p9loop} iterations.  We have, for
example, found proofs after iterating through over 50 term orderings,
with as many as 30 of the iterations contributing to the found proof.

We note that the final iteration of a successful \texttt{p9loop} execution generally
results in a proof that includes assumptions derived in previous \textsc{Prover9}
runs and that we do not immediately have the derivations of these assumptions.
Furthermore, this property may be nested in that an assumption
from a previous run may in turn rely on assumptions
from even earlier runs.  In order to get a complete proof of the final
theorem, we use \textsc{Prover9} to recover the missing derivations by systematically
eliminating these assumptions in a way that is analogous to the general proof sketches method.

\section{LC Loops}
\seclabel{lcloops}

A loop $Q$ is said to be an \emph{LC loop} if it satisfies any of the following equivalent identities:
\begin{align*}
x(x(yz)) &= (x(xy))z, \\
x(x(yz)) &= ((xx)y)z, \\
(xx)(yz) &= (x(xy))z, \\
x(y(yz)) &= (x(yy))z
\end{align*}
for all $x,y,z\in Q$. LC loops were introduced by Fenyves \cite{Fenyves} as one of the loops of \emph{Bol-Moufang type}. They were studied in more detail in \cite{PV-BM1} where the equivalence of the identities above was proven.

We have been able to establish the Main Conjecture in the special case of LC loops, that is, we have the following theorem.

\begin{theorem}
\thmlabel{LCthm}
Let $Q$ be an AIM LC loop. Then $Q/N(Q)$ is an abelian group and $Q/Z(Q)$ is a group. In particular, $Q$ is nilpotent of class at most $3$.
\end{theorem}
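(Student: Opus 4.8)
The plan is to turn the statement into a purely equational problem and attack it with the LC identities together with the AIM identities of Lemma~\lemref{inn_eq}. By Lemma~\lemref{QNabelian}, the two assertions ``$Q/N(Q)$ is an abelian group'' and ``$Q/Z(Q)$ is a group'' amount to exactly the seven goal identities of Figure~\ref{Fg:Input}: that every commutator $[x,y]$ lies in each nucleus (goals \texttt{aK1}--\texttt{aK3}), that every associator $[x,y,z]$ lies in each nucleus (goals \texttt{aa1}--\texttt{aa3}), and that every associator additionally lies in the commutant (goal \texttt{Ka}). The first six put all commutators and associators into $N(Q)$, giving part~(i); adding \texttt{Ka} puts associators into $Z(Q)=N(Q)\cap C(Q)$, giving part~(ii). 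The nilpotency bound then comes for free: since $Z(Q)$ is normal, the quotient $Q/Z(Q)$ is again an AIM loop, and being a group it has class at most $2$ by~\eqref{Eq:AIMLike}, so $Z_3(Q)=Q$ and $\cl{Q}\le 3$. Thus the entire theorem reduces to deriving these seven identities from the LC axioms and the six commuting relations among $T_x$, $L_{x,y}$, $R_{x,y}$.

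Before confronting the goals directly I would stockpile the elementary structure theory of LC loops as milestone lemmas (and, in the automated setting, as hints). Putting $y=1$ in the LC identity $x(x(yz)) = ((xx)y)z$ gives the left alternative law $x(xz) = (xx)z$, and feeding this back shows the identity is equivalent to $(xx)(yz) = ((xx)y)z$; that is, every square $x^2$ lies in the left nucleus $N_{\lambda}(Q)$. From here the standard LC facts should follow: certain of the nuclei coincide and form a normal subloop, the nucleus $N(Q)$ itself is normal (as anticipated in the remarks after the Main Conjecture, so that $Q/N(Q)$ is defined), and the inner maps act very simply on squares and on nuclear elements. These facts are what supply the rewriting rules that later keep the search under control.

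The substance of the proof is then to combine this structure with the AIM relations to force associators and commutators into the nucleus and associators into the commutant. The handle is that each of the seven goals says precisely that a given associator or commutator is fixed by one of the generating families $T_x$, $L_{x,y}$, $R_{x,y}$ of $\Inn(Q)$ (equivalently, that it lies in the corresponding nucleus or commutant), while the AIM hypothesis says exactly that these generators all commute with one another; the commuting relations are therefore the only real leverage available. I expect this to be the decisive obstacle, and a formidable one: there is no visible linear or coordinate structure to exploit, the interplay between non-associativity and an abelian inner mapping group is delicate, and---as the abstract cautions---the resulting derivations tend to be very long with no higher-level human proof known. The hardest single target should be \texttt{Ka}, the centrality of every associator, which together with the nuclear goals \texttt{aa1}--\texttt{aa3} seems to demand long chains of the AIM relations interleaved with the LC rewriting rules in an order acutely sensitive to the term ordering. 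For this reason the realistic route is the \textsc{Prover9} methodology of the paper: first prove the goals under an auxiliary simplifying assumption (such as ``left inner maps preserve inverses''), then eliminate it using earlier proofs as proof sketches, and iterate \texttt{p9loop} through many term orderings, carrying the hint-matching clauses of each run forward into the next.
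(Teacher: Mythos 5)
Your reduction of the theorem to the seven equational goals via Lemmas \lemref{inn_eq} and \lemref{QNabelian}, the derivation of $\cl{Q}\le 3$ from the two quotient statements, and the proposed \textsc{Prover9} methodology of proof sketches, hints, and \texttt{p9loop} iteration over term orderings (including first proving the LC case under the auxiliary assumption that left inner maps preserve inverses) coincide exactly with what the paper does. Note only that, as in the paper itself, the actual mathematical content for the seven goals is the machine-found derivations (of length roughly $2200$--$2850$, archived on the paper's web page), so your proposal is a faithful account of the paper's proof rather than an independent human argument.
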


It sometimes is desirable to obtain a humanized proof of a result first found by means of automated deduction tools. Generally speaking, the process of ``humanization'' can give some higher-level insights into the structure being studied and can often result in a simpler (from a human perspective) proof than the one originally generated by an automated deduction tool. Many of the references in the bibliography feature humanized proofs.

There are various varieties of loops with abelian inner mappings for which the proof
of the conjecture will be worth humanizing. We list a few of these in the next
section. However, that desire for a human proof is tempered by the law of diminishing returns. In some cases, a human proof may not be worth the time and effort put into
translating the automated proof.  For LC loops, we run right
into this issue.

We often prefer to produce automated proofs that are strictly-forward
derivations of the theorems and that are free of any applications of
demodulation.  This sometimes is for purposes of presentation, but most
often it's because we find that these proofs make better proof sketches
for future searches, especially in a larger study such as the AIM project.
In \textsc{Prover9}, these constraints can be satisfied with the
directives \texttt{set(restrict\_denials)} and \texttt{clear(back\_demod)}
respectively.  If the input list \texttt{formulas(demodulators)} is empty,
including the directive \texttt{clear(back\_demod)} ensures that no
clause---input or derived---will be used as a demodulator.

Here is the data for the strictly-forward, demodulation-free proofs
of the seven LC goals:

\begin{center}
\begin{tabular}{ccc}
Goal & Length & Level \\
\hline
Ka & 2192 & 222 \\
aK1 & 2191 & 221 \\
aK2 & 2199 & 224 \\
aK3 & 2394 & 276 \\
aa1 & 2842 & 321 \\
aa2 & 2842 & 321 \\
aa3 & 2841 & 320
\end{tabular}
\end{center}

\noindent Of course, the seven proofs have many clauses in common and some proofs are virtually identical. For example, it is known that in LC loops, the left and middle nuclei coincide. The goals aa1 and aa2 state that associators are in those nuclei, so it is not surprising that their proof lengths and levels are the same.
The proofs (and corresponding input file) can be found on this paper's
associated Web page~\cite{AIM-LC-Web}.

Since at present, the authors think that the classes of loops discussed in the next section will be more worthy of human translation, any attempt to do so for LC loops has been postponed.

\section{Further Remarks}
\seclabel{finale}

Within certain varieties of loops, it is known that there are no loops of \Csorgo type. In other words, given an AIM loop $Q$ in that variety, $Q$ is necessarily nilpotent of class at most $2$.

In order to formulate the problem of showing that an AIM loop in a particular variety is centrally nilpotent of class at most $2$, it is only necessary to add the defining equations of the variety to the assumptions and to add one more goal:
\[
[[x,y],z] = 1
\]
for all $x,y,z$. Indeed, we already know from the identities of Lemma \lemref{QNabelian} that $Q/Z(Q)$ is a group, that $[x,y]\in N(Q)$, and the last goal says that $[x,y]\in C(Q)$, so $[x,y]\in Z(Q)=C(Q)\cap N(Q)$ and $Q/Z(Q)$ is an abelian group, i.e., $\cl{Q}\le 2$.

\begin{figure}
\begin{scriptsize}
\input{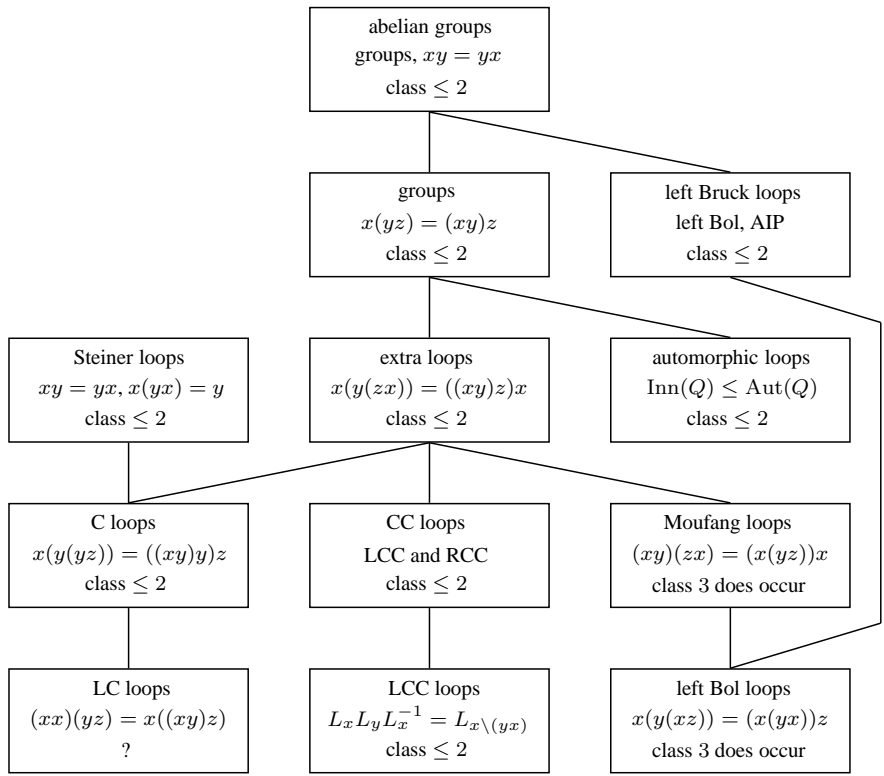}
\end{scriptsize}
\caption{Well-studied varieties of loops in which the Main Conjecture is true.}
\label{Fg:Chart}
\end{figure}

Figure \ref{Fg:Chart} summarizes what is known about the Main Conjecture and about the above-mentioned problem in several well-studied varieties of loops. The varieties include abelian groups, groups, \emph{Steiner loops} (defined by the identities $xy=yx$, $x(yx)=y$), \emph{extra loops} (defined by $x(y(zx)) = ((xy)z)x$), \emph{automorphic loops} (loops where all inner mappings are automorphisms), \emph{C loops} (defined by $((xy)y)z = x(y(yz))$), \emph{conjugacy closed} (or \emph{CC}) \emph{loops} (loops in which $L_xL_yL_x^{-1}$ is a left translation and $R_xR_yR_x^{-1}$ is a right translation for every $x$, $y$), LC loops, \emph{left Bol loops} (defined by $x(y(xz)) = (x(yx))z$), \emph{left conjugacy closed} (or \emph{LCC}) \emph{loops} (loops in which $L_xL_yL_x^{-1}$ is a left translation for every $x$, $y$), and \emph{left Bruck loops} (that is, left Bol loops with the \emph{automorphic inverse property} (or \emph{AIP}) $(xy)^{-1}=x^{-1}y^{-1}$). To keep the figure legible, we omitted all dual varieties (RC loops, right Bol loops, RCC loops, right Bruck loops) for which analogous results hold.

The varieties in Figure \ref{Fg:Chart} are listed with respect to inclusion, with smaller varieties higher up. For instance, every Moufang loop is a left Bol loop.

All varieties in Figure \ref{Fg:Chart} satisfy the Main Conjecture. Moreover, some varieties have the stronger property discussed above that an AIM loop $Q$ satisfies $\cl{Q}\le 2$. This is indicated by ``class $\le 2$'' in the figure. The already known cases where $\cl{Q}\leq 2$ are AIM LCC loops \cite{CD} and AIM left Bruck loops \cite{PSt2}.

All other cases indicated in the figure---automorphic loops, Moufang loops, left Bol loops and C loops---comprise new results which are part of this project. Their proofs will appear elsewhere, often in humanized form. Another new result is that an AIM Moufang loop $Q$ satisfies $\cl{Q}\le 2$ if $Q$ is \emph{uniquely $2$-divisible}, that is, the mapping $x\mapsto x^2$ is a bijection of $Q$. (Previously it was known that an AIM Moufang loop that is uniquely $2$-divisible and \emph{finite} satisfies $\cl{Q}\le 2$, and that there is an AIM Moufang loop $Q$ of size $2^{14}$ satisfying $\cl{Q}=3$ \cite{GN}.) The same result holds for uniquely $2$-divisible AIM left Bol loops.

Despite some serious effort, we have not been able to decide if an AIM LC loop $Q$ satisfies $\cl{Q}\le 2$ (indicated in the figure by ``?''); we believe that it does.

Finally, all other claims contained within the figure are consequences of the inclusions.

\bibliographystyle{plain}

\end{document}